\newtheorem{definition}{Definition}[section]
\newtheorem{theorem}{Theorem}[section]
\newtheorem{lemma}[definition]{Lemma}
\newtheorem{problem}{Problem}
\newcommand{\vA}{\mathbf A}
\newcommand{\abs}[1]{\lvert#1\rvert}
\newtheorem{remark}{Remark}[section]
\newcommand{\R}{{\mathbb R}}
\renewcommand{\eqref}[1]{(\ref{#1})}
\newcommand{\vx}{{\mathbf x}}
\newcommand{\vy}{{\mathbf y}}
\newcommand{\va}{{\mathbf a}}
\newtheorem{prop}{Proposition}[section]
\newtheorem{coro}[prop]{Corollary}
\date{}
\begin{document}

\title{
New Lower Bounds for the Minimum Singular Value in Matrix Selection
}

\author{Zhiqiang Xu}


\begin{abstract}
The objective of the matrix selection problem is to select a submatrix $\vA_{S}\in \mathbb{R}^{n\times k}$ from $\vA\in \mathbb{R}^{n\times m}$ such that its minimum singular value is maximized. In this paper, we employ the interlacing polynomial method to investigate this problem. This approach allows us to identify a submatrix $\vA_{S_0}\in \mathbb{R}^{n\times k}$ and establish a lower bound for its minimum singular value.
 Specifically, unlike common interlacing polynomial approaches that estimate the smallest root of the expected characteristic polynomial via barrier functions, we leverage the direct relationship between roots and coefficients. 
This leads to a tighter lower bound when $k$ is close to $n$.
For the case where $\vA\vA^{\top}=\mathbb{I}_n$ and $k=n$, our result improves the well-known result by Hong-Pan \cite{HongPan}, which involves extracting a basis from a tight frame and establishing a lower bound for the minimum singular value of the basis matrix.
\end{abstract}

\maketitle

\bigskip \medskip

\section{Introduction}
\subsection{Problem setup}
For a matrix $\vA=(\va_1,\ldots,\va_m)\in \R^{n\times m}$
and a subset $S\subset [m]:=\{1,\ldots,m\}$ of size $k$, we denote by $\vA_S:=(\va_j : j\in S)$
  the submatrix of $\vA$ comprising the columns indexed by the set $S$. 
  Here, $\# S$ denotes the cardinality of $S$. 
  For convenience, we use $\sigma_{\rm min}(\vA_S)$ to denote the $\min\{k,n\}$-th largest singular value of $\vA_S$, i.e.,
\begin{equation}\label{eq:sigma}
\sigma_{\rm min}(\vA_S):= 
\begin{cases}
\min\limits_{\|\vx\|_2=1, \vx\in \R^k} \|\vA_S\vx\| & \text{ if } 1\leq k \leq n, \\
\min\limits_{\|\vx\|_2=1, \vx\in \R^n} \|(\vA_S)^\top\vx\| & \text{ if } n<k\leq m.
\end{cases}
\end{equation}

In the case where $\vA_S$   has full rank, $\sigma_{\rm min}(\vA_S)$ represents the minimum non-zero singular value of $\vA_S$.
In this paper, we focus on the following subset selection problem.
\begin{problem}\label{pr:pr1}
Given a target matrix $\vA\in \R^{n\times m}$ and an integer $k\in [1,m)$, find a subset $S\subset [m]:=\{1,\ldots,m\}$ of size $k$ such that 
$\sigma_{\rm min}(\vA_S)$ is maximized over all possible ${m\choose k}$ 
choices for the subset $S$. Here, $\sigma_{\rm min}(\vA_S)$ is defined in (\ref{eq:sigma}).
\end{problem}

Problem \ref{pr:pr1}  is a pervasive concept that finds relevance across a wide range of fields. Its theoretical underpinnings are evident in areas such as the restricted invertibility principle of Bourgain and Tzafriri \cite{BT87,BT89, Ver01} and Banach space theory \cite{You14}.
Beyond its theoretical significance, the problem has considerable practical implications in various applied domains. These include its use in machine learning \cite{submach},   rank-revealing QR factorizations \cite{HongPan}, sensor selection \cite{subsensor},  and feature selection within $k$-means clustering \cite{subkmeans}.  In the statistics literature, this subset selection problem is also notably linked to experimental design \cite{sub2}.

\subsection{Our contribution}

This paper aims to establish a lower bound for $\max\limits_{S \subset [m], \#S=k} \sigma_{\rm min}(\vA_S)$. Building upon the work of \cite{spielman17} and \cite{xu21}, we utilize interlacing techniques to derive a novel lower bound for $\max\limits_{S \subset [m], \#S=k} \sigma_{\rm min}(\vA_S)$.
Notably, this bound proves sharper than previous results, particularly when $k=\#S$ approaches $n$. This advantage stems from our distinct methodology for estimating the smallest root. Specifically, unlike common approaches in the interlacing polynomial literature that estimate the smallest root of the expected characteristic polynomial via  barrier function  methods \cite{spielman17, xu21, inter2,CXX1,CXX2}, our work instead leverages the direct relationship between roots and coefficients for this estimation. 

Our main result is stated as follows:

\begin{theorem}\label{th:main}
Assume that  $\vA\in \R^{n\times m}$  with ${\rm rank}(\vA)=n$.
Assume that $m\geq n+1$ and $k$ is an integer satisfying $1\leq k\leq m$. 
There exists $S_0\subset [m]$ of size $k$ such that
\begin{equation}\label{eq:thlower1}
 (\sigma_{\rm min}(\vA_{S_0}))^2 \geq \frac{\abs{n-k}+1}{\alpha\cdot \left(\min\{k,n\}\cdot m-n\cdot k\right)+\abs{n-k}+1} (\sigma_{\rm min}(\vA))^2,
\end{equation}
where 
\begin{equation}\label{eq:thlower2}
\alpha:=\alpha_{m,n,k} := \begin{cases}
    \frac{1}{2}+\frac{1}{2} \left(1-h_1(m,n,k)\right)^{1/2} & \text{if } m \geq n + k \text{ and } k\leq n \\
     \frac{1}{2}+\frac{1}{2} \left(1-h_1(m,k,n)\right)^{1/2} & \text{if } m \geq n + k \text{ and } k\geq n \\
   \frac{1}{2}+\frac{1}{2} \left(1- h_2(m,n,k)\right)^{1/2} & \text{if } m \leq n + k \text{ and } k\leq n\\
    \frac{1}{2}+\frac{1}{2} \left(1- h_2(m,k,n)\right)^{1/2} & \text{if } m \leq n + k \text{ and } k\geq n
\end{cases}
\end{equation}
with $h_1$ and $h_2$ given by:
\begin{equation}\label{eq:h12}
\begin{aligned}
h_1(m,x_2,x_3) &= 4 \cdot \frac{x_3-1}{x_3^2} \cdot \left(1 - \frac{(x_3-1) \cdot (m-x_3+1)}{(m-x_2) \cdot x_2}\right), \\
h_2(m,x_2,x_3) &= 4 \cdot \frac{(x_2+1-x_3) \cdot (m-x_2-1) \cdot (x_2+x_3+1-m)}{(m-x_3) \cdot x_3 \cdot (m-x_2)^2}.
\end{aligned}
\end{equation}
\end{theorem}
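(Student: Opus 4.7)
The plan is to apply the interlacing-polynomial framework of \cite{spielman17,xu21} combined with a direct root/coefficient estimate that replaces the barrier-function analysis used in \cite{spielman17,xu21,inter2,CXX1,CXX2}.

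First, I would set up the interlacing family. For each $S\subset[m]$ with $\#S=k$, let $\ell:=\min\{k,n\}$ and
\[
p_S(x) \;:=\; \det\bigl(xI_\ell - M_S\bigr), \qquad M_S \;:=\; \begin{cases} \vA_S^{\top}\vA_S, & k\le n,\\ \vA_S\vA_S^{\top}, & k> n,\end{cases}
\]
so that the smallest root of $p_S$ equals $\sigma_{\min}^{2}(\vA_S)$. Arrange the $p_S$'s on the standard binary tree of partial selections; using the rank-one update structure of $M_S$ when one column is adjoined, one verifies, as in \cite{spielman17,xu21}, that sibling polynomials at every internal node share a common real-rooted interlacer, so the family is interlacing. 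The main theorem of interlacing families then produces $S_0\subset[m]$ with $\sigma_{\min}^{2}(\vA_{S_0})\ge\mu_1(\bar p)$, where $\bar p:=\mathbb{E}_{\#S=k}[p_S]$ and $\mu_1(\bar p)$ denotes its smallest root.

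Next, compute $\bar p$ explicitly. The Cauchy--Binet identity $e_j(\mathrm{spec}\,M_S)=\sum_{T\subset S,\,|T|=j}\det(\vA_T^{\top}\vA_T)$ for $j\le\ell$, together with averaging over uniform $S$, gives
\begin{equation*}
\bar p(x) \;=\; \sum_{j=0}^{\ell} (-1)^{j}\,\frac{\binom{m-j}{k-j}}{\binom{m}{k}}\, e_{j}(\sigma_{1}^{2},\dots,\sigma_{n}^{2})\, x^{\ell-j},
\end{equation*}
where $\sigma_1,\dots,\sigma_n$ are the singular values of $\vA$. Equivalently, $\bar p=\tfrac{k!}{m!}\,q^{(m-k)}$ for $q(x):=x^{m-n}\prod_{i=1}^{n}(x-\sigma_i^{2})$, so $\bar p$ is real-rooted with non-negative roots $\mu_1\le\cdots\le\mu_\ell$ by iterated Rolle's theorem.

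The genuinely new step is to lower-bound $\mu_1(\bar p)$ from the top coefficients of $\bar p$ together with the pointwise bound $\sigma_i^{2}\ge\sigma_{\min}^{2}(\vA)$. Writing $\bar p(x)=\sum_j(-1)^{j}c_{j}\,x^{\ell-j}$, the first two Vieta identities yield $\sum_i\mu_i=\tfrac{k}{m}\,e_1(\sigma^{2})$ and $\sum_{i<j}\mu_i\mu_j=\tfrac{k(k-1)}{m(m-1)}\,e_2(\sigma^{2})$; combining these with the Cauchy--Schwarz inequality $\bigl(\sum_{i\ge 2}(\mu_i-\mu_1)\bigr)^{2}\le(\ell-1)\sum_{i\ge 2}(\mu_i-\mu_1)^{2}$, which holds because $\mu_1=\min_i\mu_i$, translates into a quadratic inequality in $\mu_1$ whose coefficients depend only on $m,n,k,e_1$ and $e_2$. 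Substituting $\sigma_i^{2}\ge\sigma_{\min}^{2}(\vA)$ and optimising over the residual freedom in the spectrum then reduces the estimate to a quadratic inequality in $\mu_1/\sigma_{\min}^{2}(\vA)$ whose discriminant equals $1-h_j$ for the appropriate $h_j\in\{h_1,h_2\}$ from \eqref{eq:h12}. The parameter $\alpha$ of \eqref{eq:thlower2} then emerges as the larger root $\tfrac12+\tfrac12\sqrt{1-h_j}$ of $\alpha^{2}-\alpha+h_j/4=0$, and the bound $(|n-k|+1)/(\alpha(\min\{k,n\}m-nk)+|n-k|+1)\cdot\sigma_{\min}^{2}(\vA)$ in \eqref{eq:thlower1} is the smaller root of the same quadratic.

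The main obstacle I anticipate is precisely the last step: matching each of the four regimes in \eqref{eq:thlower2} with the correct extremal spectrum and verifying that the quadratic's discriminant collapses to the clean expressions $h_1,h_2$ of \eqref{eq:h12}. The regimes split along (i) $k\le n$ vs.\ $k\ge n$, which swaps $\ell$ and invokes the duality between $\vA_S^{\top}\vA_S$ and $\vA_S\vA_S^{\top}$ in Step 1, and (ii) $m\ge n+k$ vs.\ $m\le n+k$, which changes the worst-case singular-value distribution from the isotropic configuration $\sigma_i^{2}\equiv\sigma_{\min}^{2}$ to a ``one small, rest clustered'' configuration. Ensuring that the two halves $k\le n$ and $k\ge n$ agree on the boundary, and that the quadratic manipulation goes through uniformly across the four cases, is the bulk of the algebraic work.
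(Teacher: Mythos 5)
Your overall framework — interlacing family, explicit formula for the averaged characteristic polynomial via Cauchy--Binet (equivalent to the paper's Lemma 2.4), reduction of $k\ge n$ to $k\le n$ by swapping $\vA_S\vA_S^{\top}$ with $\vA_S^{\top}\vA_S$ — matches the paper. The decisive step, however, does not, and as proposed it fails.

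The paper does \emph{not} estimate the smallest root from the top two Vieta identities. You propose to bound $\mu_1$ using only $e_1=\sum_i\mu_i$ and $e_2=\sum_{i<j}\mu_i\mu_j$ via the Cauchy--Schwarz (Samuelson-type) inequality. Carried out, this yields
\[
\mu_1 \;\ge\; \frac{s_1-\sqrt{(\ell-1)\bigl[(\ell-1)s_1^{2}-2\ell s_2\bigr]}}{\ell},\qquad s_1=\sum_i\mu_i,\;\; s_2=\sum_{i<j}\mu_i\mu_j ,
\]
which for isotropic $\vA$ with $\ell=k\le n$ reduces to $\mu_1\ge\bigl(n-(k-1)\sqrt{n(m-n)/(m-1)}\bigr)/m$. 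For $k=n=2$ this coincides with the theorem, but already for $k=n=3$ and $m$ large it is \emph{negative} (e.g.\ $m=100$ gives $\approx -0.0043$), whereas the theorem asserts $\approx 0.0037$. So the Samuelson bound cannot produce \eqref{eq:thlower1}, and the claim that its discriminant collapses to $1-h_j$ is unfounded — the quadratic you get involves only $e_1,e_2$ and does not depend on $m-n-k+1$, which appears in $h_2$ and in the paper's quadratic.

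What the paper actually does is use the constraint $\lambda_i\in(0,1)$ in a two-sided way, reading coefficients of $g_{\emptyset}$ \emph{near both endpoints}: Vieta's formulas applied to the constant and linear coefficients of $g_{\emptyset}(x)$ give $\sum_i 1/\lambda_i$, and the same for $g_{\emptyset}(1-x)$ gives $\sum_i 1/(1-\lambda_i)$, both finite and explicit because the roots lie strictly in $(0,1)$. Pairing the crude bounds $\tfrac{1}{\lambda_1}+\tfrac{k-1}{\lambda_k}\le\sum\tfrac{1}{\lambda_i}$ and $\tfrac{k-1}{1-\lambda_1}+\tfrac{1}{1-\lambda_k}\le\sum\tfrac{1}{1-\lambda_i}$ and eliminating $\lambda_k$ produces the quadratic inequality \eqref{eq:qerci} in $1/\lambda_1$, whose solution gives $\alpha$ and $h_1$. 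The $m\ge n+k$ versus $m<n+k$ split is also not, as you suggest, a change of ``worst-case spectrum''; it is structural: when $m-n-k<0$ the polynomial $g_{\emptyset}$ picks up $(x-1)$-factors, so the paper passes to $\tilde g_{\emptyset}(x)=(x-1)^{m-n-k}g_{\emptyset}(x)$ of degree $m-n$ and repeats the two-sided argument, which is where $h_2$ comes from. Finally, the reduction to the isotropic case is done once and for all by SVD, so there is no ``optimisation over the residual freedom in the spectrum'' to perform; that step in your outline is both unnecessary and, as a substitute for the SVD reduction, not justified.

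In short: the framework is right, but the Cauchy--Schwarz/Samuelson estimate from the top two coefficients is strictly weaker than the paper's two-endpoint reciprocal-sum estimate, and in fact becomes vacuous (negative) once $\min\{k,n\}\ge 3$ and $m$ is large, so it cannot prove Theorem \ref{th:main}.
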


As said before, the proof of Theorem \ref{th:main} employs the interlacing polynomials method. Our primary theoretical contribution lies in refining the estimation of the smallest root of an associated expected polynomial. This specific refinement of the root estimation does not alter the fundamental  design of the algorithm based on the proof of Theorem \ref{th:main}. Consequently, the algorithm can be constructed following the standard procedure for algorithms based on interlacing polynomials \cite{spielman17, xu21}. While we omit a detailed exposition of its well-established details, we can now state its performance characteristics.
The algorithm achieves a polynomial time complexity of $O(k(m-\frac{k}{2})n^{\theta}\log(1/\epsilon))$, where $\theta \in (2, 2.373)$ denotes the matrix multiplication complexity exponent \cite{spielman17, xu21}. For any $\epsilon \in (0,\frac{1}{k})$, it outputs an $S_0\subset [m]$ of size $k$ satisfying the following bound:
\[
 (\sigma_{\rm min}(\vA_{S_0}))^2 \geq \frac{\abs{n-k}+1}{\alpha\cdot \left(\min\{k,n\}\cdot m-n\cdot k\right)+\abs{n-k}+1}(1-k\epsilon) (\sigma_{\rm min}(\vA))^2.
\]
The factor $1-k\epsilon$ in the inequality arises due to the computational approximation inherent in the algorithm.

\begin{remark}
It should be noted that the full-rank requirement for $\vA$ in Theorem \ref{th:main} is not a strict necessity. If $\text{rank}(\vA) = r$, the theorem is still applicable by simply substituting $n$ with $r$ in the expression for the lower bound.
\end{remark}

\begin{remark}
The $\alpha$ defined in (\ref{eq:thlower2}) satisfies $0<\alpha<1$, except for the case where $\alpha=1$ if $k=1$ or $m=n+1$ or $m=k+1$.
\end{remark}

Applying Theorem \ref{th:main} with $k=n$, we derive the following corollary, which establishes the existence of a basis in $\vA$, denoted as $\vA_{S_0}$, and provides a lower bound for its minimum singular value, $\sigma_{\min}(\vA_{S_0})$.

\begin{coro}\label{th:main1}
Assume that $\vA\in \R^{n\times m}$ satisfying $\vA\vA^\top={\mathbb I}_n$, where ${\mathbb I}_n$ denotes the $n \times n$ identity matrix.
Assume that $m\geq n+1$.
There exists $S_0\subset [m]$ of size $n$ such that
\[
 \sigma_{\rm min}(\vA_{S_0})^2 \geq \frac{1}{\alpha\cdot n\cdot  \left( m-n\right)+1},
\]
where 
\begin{equation}\label{eq:alpha1}
\alpha:=\alpha_{m,n} := \begin{cases}
    \frac{1}{2}+\frac{1}{2} \left(1-h_1(m,n,n)\right)^{1/2} & \text{if } m \geq 2n   \\
   \frac{1}{2}+\frac{1}{2} \left(1- h_2(m,n,n)\right)^{1/2} & \text{if } m \leq 2n  \\
\end{cases},
\end{equation}
with $h_1$ and $h_2$ given by:
\begin{equation}\label{eq:h12}
\begin{aligned}
h_1(m,n,n) &= 4 \cdot \frac{n-1}{n^2}\left(1-\frac{(n-1)(m-n+1)}{(m-n)n}\right)\\
h_2(m,n,n) &= 4 \cdot \frac{(m-n-1)(2n+1-m)}{(m-n)^3n}.
\end{aligned}
\end{equation}
\end{coro}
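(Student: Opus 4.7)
The plan is to obtain the corollary as a direct specialization of Theorem \ref{th:main} with $k=n$, using the hypothesis $\vA\vA^\top=\mathbb{I}_n$.

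First I would verify that the hypotheses of Theorem \ref{th:main} hold. Since $\vA\vA^\top = \mathbb{I}_n$ is invertible, the matrix $\vA\in\R^{n\times m}$ has $\rank(\vA)=n$, so the full-rank assumption is satisfied. Moreover, every singular value of $\vA$ equals $1$ (the rows of $\vA$ are orthonormal), hence $\sigma_{\rm min}(\vA)=1$. The condition $m\geq n+1$ is assumed, and $k=n$ clearly satisfies $1\leq k\leq m$.

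Next I would substitute $k=n$ into the bound \eqref{eq:thlower1}. Since $\abs{n-k}=0$ and $\min\{k,n\}=n$, the numerator reduces to $\abs{n-k}+1=1$, and the factor in the denominator becomes
\[
\min\{k,n\}\cdot m - n\cdot k \;=\; n\cdot m - n\cdot n \;=\; n(m-n).
\]
Using $(\sigma_{\rm min}(\vA))^2=1$, the bound \eqref{eq:thlower1} specializes exactly to
\[
(\sigma_{\rm min}(\vA_{S_0}))^2 \geq \frac{1}{\alpha \cdot n\cdot (m-n) + 1},
\]
which is the inequality claimed in the corollary.

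Finally, I would read off the formula for $\alpha$ from \eqref{eq:thlower2}. With $k=n$, the cases $k\leq n$ and $k\geq n$ both hold (they coincide at equality), and the threshold $m\geq n+k$ becomes $m\geq 2n$. The first and third lines of \eqref{eq:thlower2} yield the two cases in \eqref{eq:alpha1}, and evaluating $h_1(m,n,k)$ and $h_2(m,n,k)$ at $k=n$ using the definitions in \eqref{eq:h12} produces the stated expressions for $h_1(m,n,n)$ and $h_2(m,n,n)$. There is no real obstacle here; the entire proof is a substitution and a simplification, and the only thing to be careful about is matching the case $k=n$ to the correct branches of \eqref{eq:thlower2}, noting that both applicable branches give the same answer.
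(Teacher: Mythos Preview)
Your proposal is correct and mirrors exactly what the paper does: the corollary is stated immediately after Theorem \ref{th:main} as the specialization $k=n$, and the paper offers no separate proof beyond the sentence ``Applying Theorem \ref{th:main} with $k=n$, we derive the following corollary.'' Your verification of the hypotheses, the reduction of the bound, and the matching of the branches of \eqref{eq:thlower2} are all accurate.
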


\begin{remark}
The $\alpha$ defined in (\ref{eq:alpha1}) satisfies $0<\alpha<1$ provided $m\geq n+2$ and $n\geq 2$.
To provide further insight into the behavior of $\alpha$, we highlight the asymptotic properties of $h_1$ and $h_2$.
For large $m$, $\alpha$ is given by $ \frac{1}{2}+\frac{1}{2} \left(1-h_1(m,n,n)\right)^{1/2}$, and its asymptotic expansion is:
\[
\alpha =  \frac{1}{2}+\frac{1}{2} \left(1-h_1(m,n,n)\right)^{1/2} = 1 - \frac{1}{n^2} + O\left(\frac{1}{n^3}\right).
\]
Furthermore, in the range $n+1 \leq m \leq 2n$, where $\alpha$ is given by $ \frac{1}{2}+\frac{1}{2} \left(1-h_1(m,n,n)\right)^{1/2}$, $\alpha$ approaches 1 as $m-n$ increases, with its rate characterized by:
\[
\alpha =  \frac{1}{2}+\frac{1}{2} \left(1-h_1(m,n,n)\right)^{1/2} = 1 - \frac{1}{(m-n)^2} + O\left(\frac{1}{(m-n)^3}\right).
\]
\end{remark}

The subsequent theorem offers a refined lower bound, particularly for cases where $k \in \{2,3\}$ or $n \in \{2,3\}$. While our results for $k=2$ or $n=2$ align with Theorem \ref{th:main}, we present a simplified proof. Furthermore, for $k=3$ or $n=3$, our bounds yield a significant improvement over Theorem \ref{th:main}.

\begin{theorem}\label{th:kn23}
Assume that $\vA\in \R^{n\times m}$ with ${\rm rank}(\vA)=n$.
There exists $S_0\subset [m]$ of size $k$ such that
\[
 (\sigma_{\rm min}(\vA_{S_0}))^2 \geq \begin{cases}
      g_1(n)\cdot (\sigma_{\rm min}(\vA))^2 & \text{if }  k=2, n\geq k, \\
  g_2(n)\cdot (\sigma_{\rm min}(\vA))^2 & \text{if }  k=3, n\geq k,\\
   g_1(k)\cdot (\sigma_{\rm min}(\vA))^2 & \text{if }  n=2,  k\geq n,\\
   g_2(k)\cdot (\sigma_{\rm min}(\vA))^2 & \text{if }  n=3, k\geq n,\\
\end{cases}
\]
where $g_1(x)=\frac{x-\sqrt{x(m-x)/(m-1)}}{m}$ and 
$g_2(x)=\frac{x}{m}+\frac{2}{m}\sqrt{\frac{x(m-x)}{m-1}}\cos \left(\frac{1}{3} \arccos\left(\frac{m-2x}{m-2}\sqrt{\frac{m-1}{x(m-x)}}\right) +\frac{2\pi}{3}\right)$.
\end{theorem}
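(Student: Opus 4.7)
The plan is to execute the interlacing-polynomial scheme that underlies Theorem \ref{th:main}, but to replace the barrier-function estimate of the smallest root by an exact Cardano/Viete computation, which becomes available once the expected polynomial has degree at most three.

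First I would reduce to the tight-frame case. Setting $\mathbf{B}:=(\vA\vA^\top)^{-1/2}\vA$, we have $\mathbf{B}\mathbf{B}^\top={\mathbb I}_n$ and, by a one-line PSD manipulation using $\vA=(\vA\vA^\top)^{1/2}\mathbf{B}$, one checks that $\sigma_{\rm min}(\vA_S)^2\ge \sigma_{\rm min}(\vA)^2\,\sigma_{\rm min}(\mathbf{B}_S)^2$ for every $S$, in both regimes $k\le n$ and $k\ge n$ (for $k\ge n$ one uses the standard bound $\lambda_{\rm min}(C^{1/2}DC^{1/2})\ge \lambda_{\rm min}(C)\lambda_{\rm min}(D)$ for PSD $C,D$). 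It therefore suffices to prove each of the four inequalities under the extra assumption $\vA\vA^\top={\mathbb I}_n$; the factor $(\sigma_{\rm min}(\vA))^2$ reappears through this reduction.

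Next I would set up the expected characteristic polynomial. Put $d:=\min(k,n)$, and let $\chi_S(x):=\det(x\,{\mathbb I}_k-\vA_S^\top\vA_S)$ when $k\le n$ and $\chi_S(x):=\det(x\,{\mathbb I}_n-\vA_S\vA_S^\top)$ when $k\ge n$; in both cases the nonzero roots of $\chi_S$ are exactly the squared singular values of $\vA_S$. A short Cauchy--Binet computation gives $e_j(\vA_S^\top\vA_S)=\sum_{T\subseteq S,\,|T|=j}\det(\vA_T^\top\vA_T)$ for $j\le d$, and an analogous identity for $\vA_S\vA_S^\top$; averaging under $\vA\vA^\top={\mathbb I}_n$ yields
\[
\mathbb{E}_{|S|=k}[\chi_S(x)] \;=\; \sum_{j=0}^{d}(-1)^{j}\,\frac{\binom{k}{j}\binom{n}{j}}{\binom{m}{j}}\,x^{d-j}.
\]
Crucially this polynomial is symmetric in $(k,n)$, which is what lets a single calculation cover both $k\in\{2,3\}$ and $n\in\{2,3\}$. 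The family $\{\chi_S\}_{|S|=k}$ is an interlacing family by exactly the construction already used for Theorem \ref{th:main} (cf.\ \cite{spielman17,xu21}), so there exists $S_0$ of size $k$ with $\sigma_{\rm min}(\vA_{S_0})^2$ at least the smallest root of this expected polynomial.

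It remains to extract that smallest root. For $d=2$ the polynomial is $x^2-\tfrac{2n}{m}x+\tfrac{n(n-1)}{m(m-1)}$; the identity $(n/m)^2-\tfrac{n(n-1)}{m(m-1)}=\tfrac{n(m-n)}{m^2(m-1)}$ fed into the quadratic formula delivers the smallest root $g_1(n)$ (respectively $g_1(k)$ when $d=n$). For $d=3$ the polynomial is cubic; the Tschirnhaus shift $x\mapsto y+n/m$ produces a depressed cubic with linear coefficient $P=-3n(m-n)/(m^2(m-1))<0$, so the three roots are real and given by Viete's trigonometric formula. The branch $\tfrac{1}{3}\arccos(\cdot)+\tfrac{2\pi}{3}$ returns cosine values in $[-1,-\tfrac12]$ while the other two branches lie in $[-\tfrac12,\tfrac12]$ and $[\tfrac12,1]$, so it selects the smallest root; computing the depressed cubic's constant term $Q$ and simplifying identifies $\tfrac{3Q}{2P}\sqrt{-3/P}$ with $\tfrac{m-2n}{m-2}\sqrt{(m-1)/(n(m-n))}$, producing $g_2(n)$ (or $g_2(k)$ after swapping roles). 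The main obstacle is the cubic bookkeeping: one must verify that the $\arccos$-argument lies in $[-1,1]$ (equivalent to non-positivity of the cubic discriminant, automatic because the interlacing structure forces all roots to be real) and confirm that the $+2\pi/3$ branch is indeed smallest; beyond this, the argument reduces either to the interlacing-family infrastructure already invoked for Theorem \ref{th:main} or to elementary algebraic simplification of rational expressions in $m$, $n$, and $k$.
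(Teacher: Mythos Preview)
Your proposal is correct and follows essentially the same route as the paper: reduce to the isotropic case, invoke the interlacing-family machinery from Theorem~\ref{th:main} to pass to the expected characteristic polynomial, and compute its smallest root exactly via the quadratic formula (for $d=2$) or Vi\`ete's trigonometric cubic formula (for $d=3$). The only mild difference is that you obtain the expected polynomial via Cauchy--Binet in the manifestly $(k,n)$-symmetric form $\sum_{j}(-1)^{j}\tbinom{k}{j}\tbinom{n}{j}/\tbinom{m}{j}\,x^{d-j}$, which lets a single computation cover all four cases, whereas the paper expands $f_\emptyset$ (or $g_\emptyset$) from the derivative formula of Lemma~\ref{le:fempty} and treats the four cases one by one.
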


\subsection{Related work}

This subsection presents related work and compares our results with those in the literature. A common characteristic of prior results is the imposition of specific constraints on the range of $k$, such as $k=n$, $k\in [1, n]$, or $k\in [n, m-1]$. In contrast, our result in Theorem \ref{th:main} is more general, imposing no constraints on the integer $k$ beyond the natural condition $k\in [1, m-1]$. Consequently, we conduct comparisons against previous results across various ranges of $k$. Our analysis reveals that, particularly when $k$ is close to $n$, our bounds demonstrate superiority over existing ones.

{\bf The case where $k=n$.}
Assume that $\vA\in \R^{n\times m}$ satisfying $\vA\vA^\top={\mathbb I}_n$
In \cite{HongPan}, a well-known result was established, demonstrating the existence of $S_0\subset [m]$ of size $n$ such that 
\begin{equation}\label{eq:hongpan1}
(\sigma_{\rm min}(\vA_{S_0}))^2\geq \frac{1}{n(m-n)+1}.
\end{equation}
 This finding was subsequently employed to provide a constructive proof for the existence of rank-revealing QR factorizations. Furthermore, for the specific case where $n=2$, the authors of \cite{HongPan} also showed that there exists $S_0\subset [m]$ of size $2$ satisfying 
 \begin{equation}\label{eq:hongpan2}
 (\sigma_{\rm min}(\vA_{S_0}))^2\geq {(2-\sqrt{2}\sqrt{(m-2)/(m-1)})/m}.
 \end{equation}
Corollary \ref{th:main1} of this paper improves the bound to $(\sigma_{\rm min}(\vA_{S_0}))^2\geq \frac{1}{\alpha n(m-n)+1}$, which is a stronger result since $0<\alpha<1$ provided that $n\geq 2$ and $m\geq n+2$. The lower bound in (\ref{eq:hongpan2}) aligns with Theorem \ref{th:kn23} when $n=k=2$. However, their proof spans two pages, whereas our proof is significantly simpler.

{\bf The case where $n\leq k\leq m-1$.}
Assume that  $\vA\in \R^{n\times m}$  with ${\rm rank}(\vA)=n$.
Assume that $m\geq n+1$ and $k$ is an integer satisfying $n\leq k\leq m-1$. 
For $n\leq k\leq m-1$, the lower bound (\ref{eq:thlower1}) reduces to: 
\begin{equation}\label{eq:thlowerkn}
 (\sigma_{\rm min}(\vA_{S_0}))^2 \geq \frac{k-n+1}{\alpha\cdot n\left( m- k\right)+k-n+1} (\sigma_{\rm min}(\vA))^2.
\end{equation}
We next compare the lower bound in (\ref{eq:thlowerkn}) with the respective lower bounds presented in \cite{sub1} and \cite{xu21}.

In \cite{sub1}, a greedy algorithm is introduced that identifies a subset $S_0 \subset [m]$ of size $k$ satisfying:
\begin{equation}\label{eq:greedy}
 (\sigma_{\rm min}(\vA_{S_0}))^2 \geq \frac{k-n+1}{ n\left( m- k\right)+k-n+1} (\sigma_{\rm min}(\vA))^2.
\end{equation}
The inequality $0<\alpha<1$ is valid when $k\leq m-2$ and $n\geq 2$. Since these conditions are typically met in practical scenarios, the lower bound in (\ref{eq:thlowerkn}) is tighter than that in (\ref{eq:greedy}).

It was shown in \cite{xu21} that there exists  $S_0\subset [m]$ of size $k$ such that
\begin{equation}\label{eq:lower1}
 (\sigma_{\rm min}(\vA_{S_0}))^2 \geq \frac{(\sqrt{(k+1)(m-n)}-\sqrt{n(m-k-1)})^2}{m^2} (\sigma_{\rm min}(\vA))^2,
\end{equation}
Noting that 
\[
\begin{aligned}
\frac{(\sqrt{(k+1)(m-n)}-\sqrt{n(m-k-1)})^2}{m^2}&=\frac{(k-n+1)^2}{(\sqrt{(k+1)(m-n)}+\sqrt{n(m-k-1)})^2}\\
&\leq  \frac{(k-n+1)^2}{(k+1)(m-4n)+3mn}
\end{aligned}
\]
we can show that 
\[
\frac{k-n+1}{\alpha\cdot n\left( m- k\right)+k-n+1}> \frac{(\sqrt{(k+1)(m-n)}-\sqrt{n(m-k-1)})^2}{m^2}
\]
provided $n\leq k\leq n+3<m$. This implies that the lower bound presented in (\ref{eq:thlowerkn}) is tighter than that in (\ref{eq:lower1}) for $k \in [n,n+3]$.

\subsection{The case where $k\leq n$.}

Assume that  $\vA\in \R^{n\times m}$  satisfies $\vA\vA^\top={\mathbb I}_n$.
Assume that $m\geq n+1$ and $k$ is an integer satisfying $k<n$. 
It was shown in \cite{spielman17} that there exists  $S_0\subset [m]$ of size $k$ such that
\begin{equation}\label{eq:lower21}
 (\sigma_{\rm min}(\vA_{S_0}))^2 \geq \left(1-\sqrt{\frac{k}{n}}\right)^2\frac{n}{m}.
\end{equation}
For $k\leq n$, the lower bound (\ref{eq:thlower1}) reduces to: 
\begin{equation}\label{eq:thlowerkn21}
 (\sigma_{\rm min}(\vA_{S_0}))^2 \geq \frac{n-k+1}{\alpha\cdot k\cdot (m-n)+n-k+1}.
\end{equation}
A direct calculation shows that the following holds provided $k\in [n-4, n]$:
\[
 \frac{n-k+1}{\alpha\cdot k\cdot (m-n)+n-k+1}>\left(1-\sqrt{\frac{k}{n}}\right)^2\frac{n}{m}.
\]
It follows that the lower bound established in (\ref{eq:thlowerkn21}) provides a tighter estimate than (\ref{eq:lower21}) when $k \in [n-4,n]$.

\section{Interlacing families }

Here we recall the definition and related results of interlacing families from \cite{inter1, inter2, spielman17}. Throughout this paper, we say that a univariate polynomial is \emph{real-rooted} if all of its coefficients and roots are real.
\begin{definition}{\rm (see \cite[Definition 3.1]{inter2} ) }
We say a real-rooted polynomial $g(x) =\alpha_0\prod_{i=1}^{n-1}(x-\alpha_i)$ \emph{interlaces} a real-rooted polynomial $f(x) =\beta_0\prod_{i=1}^{n}(x - \beta_i)$ if $\beta_1\leq\alpha_1\leq\beta_2\leq\alpha_2\leq\cdots\leq \alpha_{n-1}\leq \beta_n $.
For polynomials $f_1,\ldots,f_k$, if there exists a polynomial $g$ that interlaces $f_i$ for each $i$, then we say that $f_1,\ldots,f_k$ have a \emph{common interlacing}.
\end{definition}

Following \cite{spielman17},  we define the concept of an interlacing family of polynomials as follows:
\begin{definition}[\cite{spielman17}, Definition 2.5]
\label{def-inter} An {\em interlacing family} consists of a finite rooted tree
$\mathbb{T}$ and a labeling of the nodes $v\in \mathbb{T}$ by monic real-rooted
polynomials $f_{v}(x)\in\mathbb{R}[x]$,  satisfying two key properties:
\begin{itemize}
  \item[(a)] Every polynomial $f_{v}(x)$ corresponding to a non-leaf node $v$ is a convex combination of the polynomials corresponding to the children of $v$.
  \item[(b)] For all nodes $v_1,v_2\in \mathbb{T}$ with a common parent, the polynomials $f_{v_1}(x),f_{v_2}(x)$ have a common interlacing.
\end{itemize}
We say that a set of polynomials form an interlacing family if they are the labels of
the leaves of ${\mathbb T}$.
\end{definition}

Definition \ref{def-inter} stipulates that polynomials constituting an interlacing family must be of equal degree.
We state the main property of interlacing families in the following lemma.
\begin{lemma}\label{interlacing2}{\rm (see \cite[Theorem 2.7]{spielman17} ) }
Let $F$ be an interlacing family of degree $n$  polynomials with root labeled by $f_\emptyset$
and leaves by $\{f_{\ell} \}_{\ell\in L}$.  For all indices $j\in [1,n]$  there exist leaves $\ell_0, \ell_1\in L$ such that
\[
r_{j}(f_{\ell_0}) \geq r_{j}(f_{\emptyset}) \geq r_{j}(f_{\ell_1}) ,
\]
where  $f_\emptyset$ denotes the polynomial corresponding to 
 the root node of the tree $\mathbb{T}$ and  $r_{j}(f)$ denotes the  $j$th largest   root of the real-rooted polynomial $f$.
\end{lemma}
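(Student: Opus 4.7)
The plan is to prove Lemma \ref{interlacing2} by induction on the depth of the rooted tree $\mathbb{T}$, reducing the statement to a classical one-step fact about convex combinations of polynomials that share a common interlacing.

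First I would recall (or, preferably, simply cite) the following standard fact: if $f_1,\ldots,f_k$ are monic real-rooted polynomials of common degree $n$ that share a common interlacer $g$, and $f = \sum_{i=1}^{k}\mu_i f_i$ with $\mu_i\geq 0$ and $\sum_i\mu_i=1$, then $f$ is real-rooted and for every index $j\in[1,n]$ there exist $i_1,i_2\in[k]$ with
\[
r_j(f_{i_1}) \;\leq\; r_j(f) \;\leq\; r_j(f_{i_2}).
\]
The reason is that the consecutive roots of $g$ partition the real line into intervals each containing exactly one root of every $f_i$, so the values of the $f_i$ at the roots of $g$ share a common alternating sign pattern; since the $\mu_i$ are nonnegative, this same pattern is inherited by $f$, which forces the $j$-th largest root of $f$ to lie in the closed interval spanned by $\{r_j(f_i):i\in[k]\}$. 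That is exactly the desired bracketing.

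With this tool in hand I would argue by induction on the distance from the root. Fix $j\in[1,n]$. By property (a) of Definition \ref{def-inter}, $f_\emptyset$ is a convex combination of the polynomials $f_v$ indexed by the children $v$ of $\emptyset$, and by property (b) these $f_v$ share a common interlacing. The one-step fact above then produces a child $v_0$ with $r_j(f_{v_0})\geq r_j(f_\emptyset)$. The subtree rooted at $v_0$ is itself an interlacing family (both defining properties pass to subtrees), so the same step applies at $v_0$ and yields a grandchild whose $j$-th root is $\geq r_j(f_{v_0})\geq r_j(f_\emptyset)$. Iterating down the finite tree, this chain terminates at a leaf $\ell_0$ satisfying $r_j(f_{\ell_0})\geq r_j(f_\emptyset)$. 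Making the opposite choice at each step produces a leaf $\ell_1$ with $r_j(f_{\ell_1})\leq r_j(f_\emptyset)$.

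The main obstacle is essentially the one-step common-interlacing fact stated above. This is a classical result (going back to Obreschkoff, and used systematically in the Marcus--Spielman--Srivastava program \cite{inter1,inter2}); I would therefore cite it rather than reprove it, and devote the remainder of the proof to the clean tree induction sketched above.
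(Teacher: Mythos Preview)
Your proposal is correct: the one-step fact that a convex combination of monic real-rooted polynomials with a common interlacer has its $j$-th root bracketed by the $j$-th roots of the summands is exactly the classical Obreschkoff-type statement used throughout \cite{inter1,inter2,spielman17}, and your induction down the tree is the standard way to propagate it from the root to a leaf.

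As for comparison with the paper: the paper does not prove Lemma~\ref{interlacing2} at all; it merely quotes it as \cite[Theorem~2.7]{spielman17} and uses it as a black box. What you have written is essentially the proof given in that reference, so your sketch is consistent with, and in fact more detailed than, what the present paper provides.
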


We assume that $Y\in \R^{n\times m}$ satisfying $YY^\top =\mathbb{I}_{n}$. Here, $\mathbb{I}_n$ denotes the $n\times n$ identity matrix. We use $\vy_1,\ldots,\vy_m$ to denotes the columns of $Y$. Hence, 
\[
\sum_{j=1}^m\vy_j\vy_j^\top\,\,=\,\,\mathbb{I}_n.
\]
For any $S\subset [m]:= \{1,\ldots,m\}$ with $\# S=k$, 
we define the submatrix $Y_S\in \R^{n\times k}$ as:
\[
Y_S\,\,:=\,\, [\vy_j : j\in S]\in \R^{n\times k}.
\] 
The following lemmas are helpful for our argument.
\begin{lemma}\label{le:leafs} (\cite[Theorem 5.4]{spielman17})
We assume that $Y\in \R^{n\times m}$ satisfying $YY^\top =\mathbb{I}_{n}$.
The set of polynomials defined by
\begin{equation}\label{eq:pk}
 \{ p_S(x)=\det(x\mathbb{I}_n-Y_SY_S^\top) : S\subset [m], \#S=k\} 
 \end{equation}
 forms an interlacing family.
\end{lemma}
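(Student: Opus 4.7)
The plan is to verify the two defining properties of an interlacing family (Definition \ref{def-inter}) for a natural binary tree of partial subset assignments built on $[m]$.

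First, I would construct the tree $\mathbb{T}$. A node at depth $i$ (for $0 \le i \le m$) is indexed by a partial assignment $t = (t_1, \ldots, t_i) \in \{0,1\}^i$ with $\sum_j t_j \le k$ and $k - \sum_j t_j \le m - i$, so that $t$ can still be completed to the indicator of some size-$k$ subset. The root is the empty assignment, each internal node has (at most) two children obtained by appending $0$ or $1$, and the leaves, at depth $m$, are in bijection with size-$k$ subsets $S \subset [m]$. I label the leaf corresponding to $S$ by $p_S(x) = \det(x\mathbb{I}_n - Y_S Y_S^\top)$, and label each internal node $t$ by $f_t(x)$, the uniform average of $p_S(x)$ over all consistent completions $S$. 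By construction $f_t$ is then a convex combination of its two children's labels, with the convex weights equal to the relative counts of completions, so condition (a) of Definition \ref{def-inter} is immediate.

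For the common-interlacing condition (b), I would use the classical equivalence (see \cite{inter2}): two monic polynomials of equal degree share a common interlacing if and only if every convex combination of them is real-rooted. Hence it suffices to show that for any internal node of $\mathbb{T}$, every convex combination of its two children's labels is a real-rooted polynomial in $x$. Unwinding the definition of $f_t$, each such convex combination takes the form
\[
\mathbb{E}\,\det\!\left(x\mathbb{I}_n - B - \sum_{j \in S'} \vy_j \vy_j^\top\right),
\]
where $B$ is a fixed PSD matrix (determined by the indices already forced into $S$) and $S'$ is a random subset drawn from a convex mixture of two uniform distributions on size-$k'$ subsets of fixed index sets $J, J' \subset [m]$.

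The main obstacle is proving real-rootedness of these size-constrained expected characteristic polynomials. I would handle this using the real stable polynomial framework of Borcea--Br\"and\'en. Consider the auxiliary multivariate polynomial
\[
Q(x, z_1, \ldots, z_m) := \det\!\left( x \mathbb{I}_n - B - \sum_{j} z_j \vy_j \vy_j^\top\right),
\]
which is real stable in $(x, z_1, \ldots, z_m)$ since it is the determinant of a matrix pencil that is affine in the $z_j$'s with PSD coefficients. Applying the symmetrization operator that extracts the part multilinear in the $z_j$'s of total $z$-degree $k'$ (a standard real-stability-preserving operation) and then specializing all $z_j = 1$ recovers each $f_t(x)$ up to an explicit combinatorial scalar. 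Since real stability in a single real variable coincides with real-rootedness, this yields the desired conclusion for every convex combination between siblings in $\mathbb{T}$, establishing condition (b). The delicate point I would invoke rather than rederive is that the specific symmetric projection indeed preserves real stability; this is the standard ingredient that underpins the Kadison--Singer argument of Marcus--Spielman--Srivastava and is reused in \cite[Theorem 5.4]{spielman17}.
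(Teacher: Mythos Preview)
The paper does not supply its own proof of this lemma; it is quoted from \cite[Theorem~5.4]{spielman17} and used as a black box. So the comparison is against the original argument in \cite{spielman17}, not against anything in the present paper.

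Your overall architecture (build a rooted tree, label nodes by partial averages, verify conditions (a) and (b) of Definition~\ref{def-inter}) is sound, and condition (a) is indeed trivial. The gap is in your treatment of (b). Two concrete problems:
\begin{itemize}
\item The operation ``extract the part of total $z$-degree $k'$'' is \emph{not} a real-stability preserver. For instance, $(z_1-1)(z_2+1)$ is multi-affine and real stable, but its degree-$1$ part $z_1-z_2$ vanishes at $z_1=z_2=i$. So you cannot conclude real stability after the extraction, and the last clause you invoke (``the standard ingredient that underpins the Kadison--Singer argument'') does not cover this: the MSS~II machinery handles independent Bernoulli sampling of coordinates, not size-constrained sampling.
\item Even setting that aside, the extraction followed by $z_j=1$ does not recover the uniform size-$k'$ average. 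A short computation with the multi-affine interpolation $Q=\sum_{S'}p_{S'}\prod_{j\in S'}z_j\prod_{j\notin S'}(1-z_j)$ shows that the degree-$k'$ part at $z_j=1$ equals $\sum_{S'}(-1)^{k'-|S'|}\binom{|R|-|S'|}{\,k'-|S'|\,}p_{S'}(x)$, which is an alternating sum over all $|S'|\le k'$, not $\sum_{|S'|=k'}p_{S'}(x)$. So the method does not produce $f_t$, let alone arbitrary convex combinations of siblings.
\end{itemize}
A minor additional point: with your sign convention $Q(x,z)=\det(xI-B-\sum_j z_j y_jy_j^\top)$ is real stable in $(x,-z_1,\ldots,-z_m)$, not in $(x,z_1,\ldots,z_m)$, since the coefficient of $z_j$ is $-y_jy_j^\top\preceq 0$.

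The argument in \cite{spielman17} avoids the fixed-size obstacle by working entirely in the variable $x$. One uses a depth-$k$ tree whose nodes are tuples $(s_1,\ldots,s_j)$; the partial average $f_T$ at such a node is expressed (this is exactly the mechanism recorded here as Lemma~\ref{le:fempty}) as a linear differential operator in $x$ applied to $q_T(x)=\det(xI-Y_TY_T^\top)$. Common interlacing of the children $\{f_{T\cup\{i\}}\}_i$ then follows because (i) each $q_{T\cup\{i\}}$ is a rank-one perturbation of $q_T$, so $q_T$ is a common interlacer by Cauchy's theorem, and (ii) the linear operator is real-rootedness preserving, hence carries the statement ``all convex combinations are real-rooted'' from the $q$-level to the $f$-level. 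If you want to salvage a stability-based route, the workable variant is to keep the pivot variable $z_{i+1}$ unspecialized, produce a bivariate polynomial that is real stable in $(x,z_{i+1})$, and then specialize $z_{i+1}\in\mathbb{R}$ to sweep out all convex combinations; but this requires a different construction than the one you outlined.
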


\begin{lemma}\label{le:fempty} (\cite[Lemma 5.3]{spielman17})
We assume that $Y\in \R^{n\times m}$ satisfying $YY^\top =\mathbb{I}_{n}$.
For $k\in [m]$, set
\begin{equation}\label{eq:fkong}
f_\emptyset(x):=\frac{1}{{m\choose k}} \sum_{S\subset [m], \#{S}=k}p_S(x),
\end{equation}
where
$
p_S(x)=\det(x\mathbb{I}_n-Y_SY_S^\top)
$.
The polynomial $f_\emptyset$, corresponding to the root node of the interlacing family (\ref{eq:pk}), is explicitly given by:
\begin{equation}\label{eq:fempty}
f_\emptyset(x)=\frac{(m-k)!}{m!}(x-1)^{-(m-n-k)}\partial_x^k(x-1)^{m-n}x^n.
\end{equation}
\end{lemma}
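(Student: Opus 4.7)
The plan is to compute $\sum_{\#S=k} p_S(x)$ explicitly by passing to the Gram matrix $M := Y^\top Y \in \R^{m\times m}$, and then to match the resulting polynomial against the stated derivative expression via a one-line Taylor-series identity.

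First, apply Sylvester's determinant identity $x^k \det(x\mathbb{I}_n - AB) = x^n \det(x\mathbb{I}_k - BA)$ with $A=Y_S$ and $B=Y_S^\top$ to write
$p_S(x) = \det(x\mathbb{I}_n - Y_SY_S^\top) = x^{n-k}\det(x\mathbb{I}_k - M[S,S])$
for each $S$ with $\#S=k$ (valid as a polynomial identity because $M[S,S]$ has rank at most $n$). The hypothesis $YY^\top=\mathbb{I}_n$ yields $M^2 = Y^\top(YY^\top)Y = M$, so $M$ is an orthogonal projection of rank $n$, whose eigenvalues are $n$ ones and $m-n$ zeros. Expanding the characteristic polynomial of $M[S,S]$ via principal minors as $\sum_{j=0}^{k}(-1)^j x^{k-j}\sum_{T\subset S,\,\#T=j}\det(M[T,T])$, then summing over all $S$ of size $k$ (each $T$ of size $j$ is contained in exactly $\binom{m-j}{k-j}$ such $S$) and using $\sum_{\#T=j}\det(M[T,T])=e_j(M)=\binom{n}{j}$, I obtain
$\sum_{\#S=k} p_S(x) = \sum_{j=0}^{k}(-1)^j \binom{m-j}{k-j}\binom{n}{j}\,x^{n-j}$.

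Dividing by $\binom{m}{k}$ reduces the lemma to the algebraic identity
$\sum_{j=0}^{k}(-1)^j \binom{m-j}{k-j}\binom{n}{j}\,x^{n-j} = \frac{1}{k!}(x-1)^{k+n-m}\partial_x^k\bigl[(x-1)^{m-n}x^n\bigr]$,
which I would prove by matching generating functions in an auxiliary variable $t$. Multiplying both sides by $t^k$ and summing over $k\geq 0$: for the left-hand side, swapping the sums and using $\sum_{k\geq j}\binom{m-j}{k-j}t^{k-j}=(1+t)^{m-j}$ produces $(1+t)^{m-n}\bigl(x(1+t)-t\bigr)^n$; for the right-hand side, factoring $(x-1)^{k+n-m}=(x-1)^{n-m}(x-1)^k$ and applying Taylor's theorem $\sum_{k\geq 0}(a^k/k!)\partial_x^k f(x)=f(x+a)$ with $a=t(x-1)$ gives $(x-1)^{n-m}\bigl((1+t)(x-1)\bigr)^{m-n}\bigl(x(1+t)-t\bigr)^n$, which also simplifies to $(1+t)^{m-n}\bigl(x(1+t)-t\bigr)^n$. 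Matching coefficients of $t^k$ yields the identity.

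The only subtle point is the regime $m>n+k$, where $(x-1)^{k+n-m}$ is formally a negative power of $(x-1)$. In that range $\partial_x^k[(x-1)^{m-n}x^n]$ is divisible by $(x-1)^{m-n-k}$: the minimum power of $(x-1)$ across all Leibniz terms is attained when all $k$ derivatives hit the factor $x^n$, producing the factor $(x-1)^{m-n-k}$. Thus the right-hand side of the identity is always a genuine polynomial, and the generating-function manipulation is valid uniformly in the admissible range of parameters.
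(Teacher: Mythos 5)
Your argument is correct as a whole, and it is a genuine self-contained proof of a lemma that the paper itself does not prove but simply cites from \cite{spielman17}. The standard derivation in the interlacing-families literature (visible in the commented-out ``Delete vectors'' block of the source, and in \cite{spielman17}) proceeds inductively: one adds columns one at a time using the rank-one determinant update $\det(A+vv^\top)=\det(A)\bigl(1+v^\top A^{-1}v\bigr)$ together with the isotropy $\sum_j \vy_j\vy_j^\top = \mathbb{I}_n$, and shows at each step that averaging over the next column corresponds to conjugating a first-order differential operator by a power of $x$. Your route is different and arguably more direct: you pass to the Gram matrix $M=Y^\top Y$, observe that $M$ is a rank-$n$ orthogonal projection so its elementary symmetric functions are $e_j(M)=\binom{n}{j}$, expand each $\det(x\mathbb{I}_k-M[S,S])$ by principal minors, use the double-counting $\#\{S\supset T,\ \#S=k\}=\binom{m-j}{k-j}$, and finish by matching a single bivariate generating function against the Taylor expansion of $(x-1)^{m-n}x^n$. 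This avoids any induction and any manipulation of the resolvent $(x\mathbb{I}+U_T)^{-1}$. The exchange-of-sums and the use of Sylvester's identity $x^k\det(x\mathbb{I}_n-AB)=x^n\det(x\mathbb{I}_k-BA)$ are both valid as polynomial identities, and the formal power series in $t$ is really a polynomial of degree at most $m$ (both sides vanish for $k>m$), so the Taylor-theorem step $\sum_k \tfrac{a^k}{k!}f^{(k)}(x)=f(x+a)$ is legitimate.

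One small slip in the final paragraph: you claim the minimum power of $(x-1)$ among the Leibniz terms of $\partial_x^k\bigl[(x-1)^{m-n}x^n\bigr]$ is attained ``when all $k$ derivatives hit the factor $x^n$.'' That term actually leaves $(x-1)^{m-n}$ untouched, giving the \emph{maximum} power $m-n$. The minimum power $(x-1)^{m-n-k}$ is attained when all $k$ derivatives hit $(x-1)^{m-n}$, producing $\frac{(m-n)!}{(m-n-k)!}(x-1)^{m-n-k}x^n$. The conclusion you draw — that $(x-1)^{m-n-k}$ divides $\partial_x^k\bigl[(x-1)^{m-n}x^n\bigr]$ whenever $m\ge n+k$ — is nonetheless correct, so the right-hand side of \eqref{eq:fempty} is a genuine polynomial and your generating-function identity applies uniformly.
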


\section{Proof of Theorem \ref{th:main}}

A key step in the proof of Theorem \ref{th:main} is to estimate the smallest non-zero root of $f_\emptyset$.
For $1\leq k\leq n$, to eliminate the effect of zero roots of $f_\emptyset$, we typically  estimate the smallest root of $g_\emptyset$, which is defined by
\begin{equation}\label{eq:gkong}
g_\emptyset(x):=(-1)^k\cdot \frac{m!}{(m-k)!}\cdot  f_\emptyset(x)/x^{n-k},
\end{equation}
where $f_\emptyset$ is defined in (\ref{eq:fempty}).
To present the proof of Theorem \ref{th:main}, we first introduce the following lemmas, postponing their proofs until later. 
For convenience, we define $\lambda_{\rm min}(p)$ as the smallest root of a real-rooted polynomial $p$.
\begin{lemma}\label{le:gmin}
Assume that $k, n,$ and $m$ are integers such that $1\leq k\leq n$ and $m\geq n+1$.
The 
$
g_\emptyset(x)
$
is a polynomial with degree of $k$, where $g_\emptyset$ is defined in (\ref{eq:gkong}).
Then the smallest root of $g_\emptyset$ satisfies
\[
 \lambda_{\rm min}(g_\emptyset) \,\,\geq \,\,\frac{n-k+1}{\alpha\cdot k(m-n)+n-k+1},
\]
where  
\[
\alpha:=\alpha_{m,n,k} := \begin{cases}
    \frac{1}{2}+\frac{1}{2} \left(1-h_1(m,n,k)\right)^{1/2} & \text{if } m \geq n + k,  \\
   \frac{1}{2}+\frac{1}{2} \left(1- h_2(m,n,k)\right)^{1/2} & \text{if } m < n + k, \\
\end{cases}
\]
with $h_1, h_2$ being defined in (\ref{eq:h12}).
\end{lemma}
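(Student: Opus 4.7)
The plan is to establish Lemma \ref{le:gmin} by first writing $g_\emptyset$ in a closed form, then reading off the first two elementary symmetric polynomials of its roots from its coefficients, and finally invoking Samuelson's inequality --- the elementary identity bounding the smallest real root of a real-rooted polynomial in terms of its first two power sums --- to obtain the desired lower bound on $\lambda_{\min}(g_\emptyset)$. This is what the introduction describes as ``leveraging the direct relationship between roots and coefficients,'' in contrast to the barrier-function analyses of \cite{spielman17,xu21}.

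\emph{Explicit form and coefficients.} Applying the Leibniz rule to $\partial_x^k[(x-1)^{m-n}x^n]$ and simplifying the prefactor via $\binom{k}{j}\frac{(m-n)!}{(m-n-j)!}\frac{n!}{(n-k+j)!} = k!\binom{m-n}{j}\binom{n}{k-j}$, I would show that
\[
g_\emptyset(x) \;=\; (-1)^k \sum_{j=0}^{\min(k,\,m-n)} k!\binom{m-n}{j}\binom{n}{k-j}\,(x-1)^{k-j}\,x^{j},
\]
which is a polynomial of degree $k$. When $m<n+k$, the sum truncates at $j=m-n$ and every surviving term carries a factor $(x-1)^{n+k-m}$, so $g_\emptyset(x) = c\,(x-1)^{n+k-m}\,q(x)$ for some polynomial $q$ of degree $m-n$ and nonzero constant $c$. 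Three applications of Vandermonde's convolution $\sum_j\binom{m-n}{j}\binom{n}{k-j}=\binom{m}{k}$ --- using $(k-j)\binom{n}{k-j}=n\binom{n-1}{k-j-1}$ and its iterate --- then yield, in both regimes,
\[
\sigma_1 \,=\, \frac{kn}{m}, \qquad \sigma_2 \,=\, \frac{k(k-1)n(n-1)}{2m(m-1)}, \qquad k\tau - \sigma_1^2 \,=\, \frac{k^2(k-1)n(m-n)}{m^2(m-1)},
\]
where $\tau := \sigma_1^2 - 2\sigma_2$ is the sum of squares of the roots.

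\emph{Samuelson's inequality and the two regimes.} The real-rootedness of $g_\emptyset$ is inherited from that of $f_\emptyset$, which follows from the interlacing-family structure (Lemmas \ref{le:leafs} and \ref{le:fempty} combined with Lemma \ref{interlacing2}). Samuelson's inequality applied to the $k$ real roots of $g_\emptyset$ yields
\[
\lambda_{\min}(g_\emptyset) \;\geq\; \frac{\sigma_1 - \sqrt{(k-1)(k\tau - \sigma_1^2)}}{k} \;=\; \frac{n - (k-1)\sqrt{n(m-n)/(m-1)}}{m}.
\]
For $m \geq n+k$, this is already of the target shape: setting $A := \sqrt{n(m-n)/(m-1)}$ and solving $(n - (k-1)A)/m = (n-k+1)/(\alpha k(m-n) + n-k+1)$ expresses $\alpha$ as an explicit rational function of $(m,n,k,A)$, and a direct calculation of $(2\alpha-1)^2$ confirms that it equals $1 - h_1(m,n,k)$; combined with $\alpha \geq 1/2$, this matches the closed form in the lemma. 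For $m < n+k$, applying Samuelson to $g_\emptyset$ directly is wasteful because its $n+k-m$ roots pinned at $x=1$ inflate the variance; instead, I would apply Samuelson to the factor $q(x)$ of degree $m-n$ from the first step, whose first two elementary symmetric polynomials are recovered from $(\sigma_1,\sigma_2)$ by subtracting the contributions of the $n+k-m$ fixed roots at $1$ (giving $\sigma_1^{(q)} = (m-n)(m-k)/m$ and an analogous expression for $\sigma_2^{(q)}$). The same rewriting then produces the target form with $h_2$ in place of $h_1$.

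\emph{Main obstacle.} The principal technical point is the algebraic identity $(2\alpha-1)^2 = 1 - h_i$ for $i=1,2$: it recasts the Samuelson bound in the compact form announced in the lemma, but requires a careful manipulation involving the surd $A$ and must be verified separately in each of the two regimes. The rest of the argument --- Leibniz expansion, Vandermonde identities, and Samuelson's inequality --- is routine.
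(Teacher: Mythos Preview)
Your computations of $\sigma_1 = kn/m$ and $\sigma_2 = k(k-1)n(n-1)/(2m(m-1))$ are correct, and Samuelson's inequality does yield the valid lower bound
\[
\lambda_{\min}(g_\emptyset)\;\geq\;\frac{n-(k-1)\sqrt{n(m-n)/(m-1)}}{m}.
\]
However, this is \emph{not} the bound in the lemma, and the promised verification $(2\alpha-1)^2=1-h_1$ fails. Take $(m,n,k)=(10,5,3)$: your Samuelson expression gives $\lambda_{\min}\geq 1/6\approx 0.1667$, whereas the lemma asserts $\lambda_{\min}\geq 2/(7+\sqrt{17})\approx 0.1798$. Solving your displayed equation for $\alpha$ at these parameters yields $\alpha=1$, not the lemma's $\alpha=\tfrac12+\sqrt{17}/10\approx 0.912$. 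The two bounds happen to coincide at $k=2$ only because with two roots both procedures recover the exact smallest root; for $k\geq 3$ they are different quantities, and Samuelson's can fall strictly below the target.

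The paper does not use $(\sigma_1,\sigma_2)$ at all. From the Bernstein expansion it reads off the ratio of the two lowest-order coefficients to obtain
\[
\sum_{i=1}^{k}\frac{1}{\lambda_i}\;=\;k\,\frac{m-k+1}{n-k+1},
\]
and then --- this is the ingredient your plan is missing --- applies the substitution $x\mapsto 1-x$ to obtain a second identity
\[
\sum_{i=1}^{k}\frac{1}{1-\lambda_i}\;=\;k\,\frac{m-k+1}{m-n-k+1}.
\]
The ordering $\lambda_1\leq\lambda_i\leq\lambda_k$ turns these into two inequalities coupling only $\lambda_1$ and $\lambda_k$; eliminating $\lambda_k$ produces a quadratic inequality in $1/\lambda_1$ whose larger root is exactly the reciprocal of the bound in the lemma. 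For $m<n+k$ the same scheme is run on the degree-$(m-n)$ factor, as you correctly anticipated. The conceptual gap is that Samuelson uses only mean and variance of the roots, whereas the paper's argument exploits the fact that all roots lie in $(0,1)$ through the companion sum $\sum 1/(1-\lambda_i)$; without that second piece of information you cannot reach the stated $\alpha$.
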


\begin{lemma}\label{le:knh1}
Assume that $k, n,$ and $m$ are integers such that $n\leq k\leq m$ and $m\geq n+1$.
 Let $f_\emptyset$ be defined in (\ref{eq:fempty}).
 Then the smallest root of $f_\emptyset$ satisfies
 
 \[
 \lambda_{\rm min}({f}_\emptyset) \,\,\geq \,\,\frac{k-n+1}{\alpha\cdot n(m-k)+k-n+1},
\]
where 
\[
\alpha:=\alpha_{m,n,k} := \begin{cases}
    \frac{1}{2}+\frac{1}{2} \left(1-h_1(m,k,n)\right)^{1/2} & \text{if } m \geq n + k,  \\
   \frac{1}{2}+\frac{1}{2} \left(1- h_2(m,k,n)\right)^{1/2} & \text{if } m < n + k, \\
\end{cases}
\]
 with $h_1, h_2$ being defined in (\ref{eq:h12}).
\end{lemma}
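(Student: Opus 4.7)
The plan is to reduce Lemma~\ref{le:knh1} to Lemma~\ref{le:gmin} by a duality argument that exchanges the ``tall'' regime $k \geq n$ for the ``wide'' regime $m-k \leq m-n$. Since $f_\emptyset$ depends only on $m,n,k$ through \eqref{eq:fempty}, I would fix a specific $Y \in \R^{n\times m}$ with $YY^\top = I_n$ and adjoin $Z \in \R^{(m-n)\times m}$ whose rows complete those of $Y$ to an orthonormal basis of $\R^m$; then $ZZ^\top = I_{m-n}$ and $Y^\top Y + Z^\top Z = I_m$. The boundary case $k=m$ is trivial, since then $f_\emptyset(x) = (x-1)^n$ and the claimed lower bound equals $1$, so I may assume $n\leq k\leq m-1$.

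The next step is to establish a polynomial identity between $f_\emptyset$ and the dual polynomial $\tilde f_\emptyset$, namely the analog of \eqref{eq:fempty} obtained by the substitution $(n,k)\mapsto(m-n,m-k)$; by Lemma~\ref{le:fempty} applied to $Z$, this $\tilde f_\emptyset$ equals the average of $\det(xI_{m-n}-Z_TZ_T^\top)$ over $|T|=m-k$. Starting from $Y_S^\top Y_S + Z_S^\top Z_S = I_k$ (for each $|S|=k$, $T=[m]\setminus S$), the Sylvester determinant identity $\det(xI_k - Y_S^\top Y_S) = x^{k-n}\det(xI_n - Y_SY_S^\top)$ (valid since $k\geq n$), and the complementary relation $\det((1-x)I_{m-n}-Z_SZ_S^\top) = (-1)^{m-n}\det(xI_{m-n}-Z_TZ_T^\top)$ coming from $Z_SZ_S^\top = I_{m-n}-Z_TZ_T^\top$, averaging over $S$ yields the case-uniform relation
\[
x^{k-n}(x-1)^{\max\{0,\,m-n-k\}}\, f_\emptyset(x) \;=\; (x-1)^{\max\{0,\,k+n-m\}}\,\tilde f_\emptyset(x).
\]
Introducing the $Z$-analog $\tilde g_\emptyset(x) := (-1)^{m-k}\tfrac{m!}{k!}\tilde f_\emptyset(x)/x^{k-n}$ of $g_\emptyset$, this identity shows that $\tilde g_\emptyset$ and $f_\emptyset$ have the same multiset of roots up to additional copies of $1$. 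Since every $Y_SY_S^\top$ and every $Z_TZ_T^\top$ is a partial sum in a resolution of the identity, both polynomials have all their roots in $[0,1]$, so $\lambda_{\min}(f_\emptyset) = \lambda_{\min}(\tilde g_\emptyset)$.

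I would then invoke Lemma~\ref{le:gmin} on $\tilde g_\emptyset$ with parameters $(m,\,m-n,\,m-k)$, which satisfy the hypothesis $1\leq m-k\leq m-n$. The lemma immediately gives
\[
\lambda_{\min}(\tilde g_\emptyset) \;\geq\; \frac{k-n+1}{\tilde\alpha\,n(m-k)+k-n+1},
\]
where $\tilde\alpha = \tfrac12+\tfrac12(1-h_1(m,m-n,m-k))^{1/2}$ when $n+k\geq m$ (the case $m\geq(m-n)+(m-k)$ for the dual) and $\tilde\alpha = \tfrac12+\tfrac12(1-h_2(m,m-n,m-k))^{1/2}$ otherwise. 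Matching $\tilde\alpha$ with the $\alpha$ of Lemma~\ref{le:knh1} reduces to the algebraic identities
\[
h_1(m,m-n,m-k)=h_2(m,k,n),\qquad h_2(m,m-n,m-k)=h_1(m,k,n),
\]
which I would verify by direct substitution into \eqref{eq:h12} together with the polynomial factorization $(m-k)k-(n-1)(m-n+1)=(k-n+1)(m-n-k+1)$. At the boundary $m=n+k$ the two conventions agree because $h_1(m,k,n) = h_2(m,k,n) = \tfrac{4(n-1)(k-n+1)}{n^3k}$ there.

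The hard part will be the duality identity in the second paragraph: assembling it uniformly across the sub-cases $m\gtreqless n+k$ requires careful bookkeeping of the sign $(-1)^{k+m-n}$, of the rank defects of $Y_SY_S^\top$ versus $Y_S^\top Y_S$ (yielding the $x^{k-n}$ factor) and the analogous pair for $Z$, and of the $x\leftrightarrow 1-x$ shift induced by $I_k - Z_S^\top Z_S$. Once that identity is in place, the appeal to Lemma~\ref{le:gmin} and the $h_1\leftrightarrow h_2$ substitution are routine.
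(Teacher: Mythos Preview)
Your argument is correct, but it takes a genuinely different route from the paper's. The paper proves the lemma in two lines by invoking the derivative identity of Lemma~\ref{le:knh} (taken from \cite{spielman17}),
\[
\partial_x^k (x-1)^{m-n}x^n \;=\; \frac{(m-n)!}{(m-k)!}\,\frac{1}{x^{k-n}}\,\partial_x^n (x-1)^{m-k}x^k,
\]
which shows that $f_\emptyset$ and $\tilde g_\emptyset(x):=x^{-(k-n)}\partial_x^n(x-1)^{m-k}x^k$ have the same smallest root; this $\tilde g_\emptyset$ is precisely the $g_\emptyset$ of Lemma~\ref{le:gmin} after swapping the symbols $n\leftrightarrow k$, so the bound with $h_i(m,k,n)$ falls out immediately with no further algebra.

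Your approach instead realizes a different symmetry, $(n,k)\mapsto(m-n,m-k)$, via the orthogonal complement $Z$ and the Sylvester/complement manipulations you describe. That duality is valid (and the averaging identity you state is correct in both sub-cases), but it lands you on Lemma~\ref{le:gmin} with parameters $(m,m-n,m-k)$, which outputs $h_i(m,m-n,m-k)$ rather than $h_i(m,k,n)$. You then have to close the loop with the cross-identities $h_1(m,m-n,m-k)=h_2(m,k,n)$ and $h_2(m,m-n,m-k)=h_1(m,k,n)$; these do hold, by the factorization you quote, but they are an extra nontrivial verification that the paper's $n\leftrightarrow k$ swap avoids entirely. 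In short: both reductions are sound, the paper's is shorter and purely analytic, while yours trades brevity for a linear-algebraic picture that reveals a second, complementary symmetry of $f_\emptyset$.
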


We next present the proof of Theorem  \ref{th:main}.
\begin{proof}[Proof of Theorem \ref{th:main}]

We first establish the conclusion by assuming that $\vA\vA^\top={\mathbb I}_n$.
For this scenario, we first focus on the condition $1\leq k\leq n$.
For $S\subset [m]$ with $\# S=k$, set 
\[
 p_S(x)\,\,:=\,\, \det(x\mathbb{I}_n-\vA_S\vA_S^\top).
\]
Since $k\leq n$, we have
$$
(\sigma_{\rm mim}(\vA_S))^2=r_k(p_S),
$$
where $r_k(p_S)$ denotes the $k$th largest root of $p_S$.
According to Lemma \ref{le:leafs}, the set of polynomials defined by
\[
 \{ p_S(x)=\det(x\mathbb{I}_n-\vA_S\vA_S^\top) : S\subset [m], \#S=k\} 
 \]
 forms an interlacing family.

Let $f_\emptyset$ denote the polynomial corresponding to the root node of the interlacing family. 
Lemma \ref{le:fempty} implies 
\begin{equation*}
f_\emptyset(x)=\frac{(m-k)!}{m!}(x-1)^{-(m-n-k)}\partial_x^k(x-1)^{m-n}x^n.
\end{equation*}
A simple observation is that, since $f_\emptyset$ contains the factor $x^{n-k}$, it follows that $r_j(f_\emptyset)=0$ for $k+1\leq j\leq n$.
Then, according to Lemma   \ref{le:gmin},
\begin{equation}\label{eq:ourlow}
r_k(f_\emptyset) =\lambda_{\rm min}(g_\emptyset) \geq \frac{n-k+1}{\alpha\cdot k(m-n)+n-k+1},
\end{equation}
where $g_\emptyset$ and $\alpha$ are defined in Lemma \ref{le:gmin}.  
Combining Lemma \ref{interlacing2} with \eqref{eq:ourlow}, it follows that there exists an 
 there exists $S_0\subset [m]$ with $\#S_0=k$ such that
\begin{equation}\label{eq:ks01}
(\sigma_{\rm min}(\vA_{S_0}))^2=r_k(p_{S_0}) \geq r_k(f_\emptyset)  \geq \frac{n-k+1}{\alpha\cdot k(m-n)+n-k+1}.
\end{equation}

We now consider the case where $k \geq n$. 
For any $S\subset [m]$ with $\# S=k$, set 
\[
 p_S(x)\,\,:=\,\, \det(x\mathbb{I}_n-\vA_S\vA_S^\top).
\]
Given $k\geq n$, it holds that
$$
(\sigma_{\rm mim}(\vA_S))^2=\lambda_{\rm min}(p_S).
$$

Employing a similar argument as before and applying Lemma \ref{le:knh1}, we arrive at the following conclusion:
there exists $S_0\subset [m]$ with $\#S_0=k$ such that
\begin{equation}\label{eq:ks02}
(\sigma_{\rm mim}(\vA_S))^2= \lambda_{\rm min}(p_{S_0}) \geq \lambda_{\rm min}(f_\emptyset)  \geq  \frac{k-n+1}{\alpha\cdot n(m-k)+k-n+1},
\end{equation}
where  $\alpha$ is defined in Lemma \ref{le:knh1}.
Synthesizing the results from equations (\ref{eq:ks01}) and (\ref{eq:ks02}), we can establish the theorem for the case where $\vA\vA^\top={\mathbb I}_n$.

We now consider the general matrix  $\vA\in \R^{n\times m}$.
We assume that the thin SVD of $\vA$ is given by $\vA=U\Sigma Y$, where 
$\Sigma={\rm diag}(\sigma_1(\vA),\ldots,\sigma_n(\vA))\in \R^{n\times n}$
is a diagonal matrix containing the singular values of $\vA$, $U\in \R^{n\times n}$  and $Y\in \R^{n\times m}$
  satisfy $U^\top U=YY^\top={\mathbb I}_n$.
  A straightforward argument reveals the following relationship for any subset $S\subset [m]$:
\begin{equation}\label{eq:noninstro} 
\sigma_{\rm min}(\vA_{S})^2 \geq (\sigma_{\rm min}(Y_{S}))^2 (\sigma_{\rm min }(\vA))^2. 
\end{equation}
To demonstrate this, let $k := \# S$ denote the cardinality of the subset $S$.
For the case where $k \geq n$  we have
\[
\begin{aligned}
\sigma_{\rm min}(\vA_{S})^2 &=\min_{\vx\in \R^n, \|\vx\|=1}\|\vA_S \vx\|^2 = \min_{\vx\in \R^n, \|\vx\|=1} \|U\Sigma Y_S\vx\|^2 =
\min_{\vx\in \R^n, \|\vx\|=1} \|\Sigma Y_S\vx\|^2\\
&\geq  (\sigma_{\rm min }(\vA))^2  \min_{\vx\in \R^n, \|\vx\|=1} \| Y_S\vx\|^2
=(\sigma_{\rm min}(Y_{S}))^2 (\sigma_{\rm min }(\vA))^2. 
\end{aligned}
\]
The argument proceeds similarly for the case where $k < n$.
 Since $Y$ is an isotropic matrix ($Y \in \R^{n \times m}$ with $YY^\top={\mathbb I}_n$), we can apply equations (\ref{eq:ks01}) and (\ref{eq:ks02}) to $(\sigma_{\rm min}(Y_S))^2$. This result, combined with the derived inequality (\ref{eq:noninstro}), allows us to establish a lower bound for $\sigma_{\rm min}(\vA_S)^2$, which concludes the proof.
\end{proof}

\begin{proof}[Proof of Lemma \ref{le:gmin}]
According to equation (\ref{eq:fempty}), we can express 
\[
g_\emptyset(x):=(-1)^k\cdot \frac{m!}{(m-k)!}\cdot f_{\emptyset}(x)/x^{n-k}
\]
 in the form:
\begin{equation}\label{eq:gkong}
g_\emptyset(x)=c_0 B_{k,0}(x)+c_1 B_{k,1}(x)+\cdots+c_kB_{k,k}(x)
\end{equation}
where the coefficients $c_i$ are given by:
\[
c_i=
\begin{cases} 
(-1)^{i}\frac{n!}{(n-k+i)!}\frac{(m-n)!}{(m-n-i)!} & \text{for } 0\leq i \leq \min \{m-n,k\}, \\ 
0 & \text{for } m-n<i\leq k. 
\end{cases}
\]
Here,   $B_{k,i}$, $i=0,\ldots,k,$ denote the Bernstein basis polynomials, which are defined by:
\[
B_{k,i}(x)={k\choose i} x^i (1-x)^{k-i},\quad i=0,\ldots,k.
\]
A straightforward calculation reveals that the coefficient of $x^k$
  in $g_\emptyset(x)$
  is:
\[
(-1)^k\sum_{i=0}^{\min\{m-n,k\}} {k\choose i} \frac{n!}{(n-k+i)!}\cdot \frac{(m-n)!}{(m-n-i)!}\neq 0.
\]
This non-zero coefficient confirms that $g_\emptyset$ is indeed a polynomial of degree exactly $k$.

Let us first examine the case where $m \geq n+k$. In this scenario, the constant term and the coefficient of $x$ in $g_\emptyset$
  are $c_0$  and $-k(c_0-c_1)$, respectively. Thus, we can express $g_\emptyset(x)$
  as:
\[
g_\emptyset(x)=c_0-k(c_0-c_1)x+O(x^2).
\]

It follows from the mean value theorem and (\ref{eq:fempty}) that all roots of $g_\emptyset$ are in $(0, 1)$.
We use $0<\lambda_1\leq \lambda_2\leq \cdots\leq \lambda_k<1$ to denote the roots of $g_\emptyset$. Then, according to 
Vieta's formulas, we have
\begin{equation}\label{eq:dao1}
\frac{1}{\lambda_1}+\frac{1}{\lambda_2}+\cdots+\frac{1}{\lambda_k}=k\cdot \frac{c_0-c_1}{c_0}=k\cdot \frac{m-k+1}{n-k+1}.
\end{equation}
We observe that $g_\emptyset(1 - x)$ constitutes a polynomial in $x$ of degree $k$. Consequently, $1-\lambda_1, 1-\lambda_2, \ldots, 1-\lambda_k$ are the roots of $g_\emptyset(1-x)$, where $1-\lambda_1 \geq 1-\lambda_2 \geq \cdots \geq 1-\lambda_k$. According to (\ref{eq:gkong}), we have
\[
g_\emptyset(1-x) = c_k - k(c_k - c_{k-1})x + O(x^2).
\]
Applying Vieta's formulas, we obtain
\begin{equation}\label{eq:dao2}
\frac{1}{1-\lambda_1} + \frac{1}{1-\lambda_2} + \cdots + \frac{1}{1-\lambda_k} =k\cdot \frac{c_k-c_{k-1}}{c_k}= k\cdot \frac{m-k+1}{m-n-k+1}.
\end{equation}
Utilizing equations (\ref{eq:dao1}) and (\ref{eq:dao2}), we derive:
\begin{equation*}
\begin{aligned}
\frac{1}{\lambda_1}+\frac{k-1}{\lambda_k}&\,\,\leq \,\,k\cdot \frac{m-k+1}{n-k+1}\\
 \frac{k-1}{1-\lambda_1}+\frac{1}{1-\lambda_k} &\,\,\leq\,\, k\cdot \frac{m-k+1}{m-n-k+1}.
\end{aligned}
\end{equation*}
These inequalities lead to: 
\begin{equation}
\begin{aligned}
\frac{1}{\lambda_k}-1 &\,\,\leq\,\, \frac{k}{k-1}\cdot \frac{m-k+1}{n-k+1}-\frac{1}{k-1}\cdot \frac{1}{\lambda_1}-1\\
\frac{1}{\lambda_k}-1 &\,\,\geq\,\,  \frac{1}{k\cdot \frac{m-k+1}{m-n-k+1}-1-\frac{k-1}{1-\lambda_1}}.
\end{aligned}
\end{equation}
Consequently, we obtain
\begin{equation}\label{eq:qerci}
\frac{1}{k\cdot \frac{m-k+1}{m-n-k+1}-1-\frac{k-1}{1-\lambda_1}}\leq \frac{k}{k-1}\cdot \frac{m-k+1}{n-k+1}-\frac{1}{k-1}\cdot \frac{1}{\lambda_1}-1.
\end{equation}
Equation (\ref{eq:qerci}) represents a quadratic inequality in terms of $1/\lambda_1$. By solving this inequality, we can determine that
\[
\begin{aligned}
\lambda_{\rm min}(g_\emptyset)=\lambda_1&\geq \frac{2kn(n-k+1)}{kn(k(m-n)+2(n-k+1))+2k(m-n)n\sqrt{(k/2-1)^2+(k-1)^2\frac{(m-k+1)}{(m-n)n}}}\\
&= \frac{n-k+1}{\alpha\cdot k(m-n)+n-k+1},
\end{aligned}
\]
where 
 $\alpha=\frac{1}{2}+\frac{1}{2} \left(1-h_1(m,n,k)\right)^{1/2} $.
 Thus, we have established the lemma's conclusion for the case where $m \geq n+k$.

We now turn our attention to the case where $m<n+ k$.
Let us define 
\[
\tilde{g}_\emptyset(x) := (x-1)^{m-n-k}g_\emptyset(x),
\]
 which is a polynomial of degree $m-n$. 
A key observation is that $\lambda_{\min}(g_{\emptyset}) = \lambda_{\min}(\tilde{g}_{\emptyset})$. Therefore, it suffices to analyze $\lambda_{\min}(\tilde{g}_{\emptyset})$. 
Let $0<\lambda_1 \leq \lambda_2 \leq \cdots \leq \lambda_{m-n}<1$ denote the roots of $\tilde{g}_{\emptyset}$. 
Employing an approach analogous to our previous analysis, we derive the following equations:
\begin{equation}
\begin{aligned}
\frac{1}{\lambda_1} +\frac{1}{\lambda_2} \cdots + \frac{1}{\lambda_{m-n}} &= (m-n)\cdot \frac{n+1}{n-k+1},\\
\frac{1}{1-\lambda_1}+\frac{1}{1-\lambda_2}+\cdots+\frac{1}{1-\lambda_{m-n}}&=(m-n)\cdot \frac{n+1}{k-m+n+1},
\end{aligned}
\end{equation}
which subsequently yield the inequalities:
\[
\begin{aligned}
\frac{1}{\lambda_1}+\frac{m-n-1}{\lambda_{m-n}}&\,\,\leq\,\, (m-n)\cdot\frac{n+1}{n-k+1}\\
\frac{m-n-1}{1-\lambda_1}+\frac{1}{1-\lambda_{m-n}}&\,\,\leq\,\, (m-n)\cdot\frac{n+1}{k-m+n+1}.
\end{aligned}
\]
From these inequalities, we deduce:
\[
\begin{aligned}
\frac{1}{\lambda_{m-n}}-1&\leq \frac{m-n}{m-n-1} \frac{n+1}{n-k+1}-\frac{1}{m-n-1}\frac{1}{\lambda_1}-1,\\
\frac{1}{\lambda_{m-n}}-1& \geq \frac{1}{(m-n)\frac{n+1}{k-m+n+1}-1-\frac{m-n-1}{1-\lambda_1}},
\end{aligned}
\]
which consequently implies: 
\begin{equation}\label{eq:qerci2}
\frac{1}{(m-n)\frac{n+1}{k-m+n+1}-1-\frac{m-n-1}{1-\lambda_1}}\leq \frac{m-n}{m-n-1} \frac{n+1}{n-k+1}-\frac{1}{m-n-1}\frac{1}{\lambda_1}-1.
\end{equation}
Equation (\ref{eq:qerci2}) represents a quadratic inequality in terms of $1/\lambda_1$. Upon solving this inequality, we obtain:
\[
\begin{aligned}
\lambda_{\rm min}(g_{\emptyset})=\lambda_1&\geq \frac{n-k+1}{\frac{1}{2}k(m-n)+n-k+1+\frac{1}{2} k(m-n)\sqrt{ 1- \frac{4(n+1-k)(m-n-1)(n+k+1-m) }{(m-k)k(m-n)^2}  }}\\
&=\frac{n-k+1}{\alpha\cdot k(m-n)+n-k+1},
\end{aligned}
\]
where 
 $\alpha=\frac{1}{2}+\frac{1}{2} \left(1-h_2(m,n,k)\right)^{1/2} $.
\end{proof}

We present the following lemma, originally established in \cite{spielman17}, which proves instrumental in demonstrating Lemma  \ref{le:knh1}.
\begin{lemma}\label{le:knh}(\cite{spielman17})
For any integer $k$ such that $n \leq k \leq m$, 
we have
\[
\partial_x^k (x-1)^{m-n}x^n\,\,=\,\,\frac{(m-n)!}{(m-k)!} \frac{1}{x^{k-n}}\partial_x^n (x-1)^{m-k}x^k.
\]
\end{lemma}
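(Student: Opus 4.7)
The plan is to verify this polynomial identity by expanding both sides via the Leibniz rule and comparing the coefficients of each monomial of the form $(x-1)^{m-k-i}x^i$. The hypothesis $k\geq n$ will be essential for aligning the ranges of summation on the two sides.

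First I would apply the Leibniz rule to $\partial_x^k[(x-1)^{m-n}x^n]$. Using $\partial_x^j(x-1)^{m-n}=\frac{(m-n)!}{(m-n-j)!}(x-1)^{m-n-j}$ (which vanishes for $j>m-n$) and $\partial_x^{k-j}x^n=\frac{n!}{(n-k+j)!}x^{n-k+j}$ (which vanishes for $j<k-n$, with lower limit $k-n\geq 0$ by hypothesis), the left-hand side becomes a sum over $j$ from $k-n$ to $\min(k,m-n)$. Reindexing by $i=j-(k-n)$, this can be written as
\[
\partial_x^k[(x-1)^{m-n}x^n]=\sum_{i=0}^{\min(n,\,m-k)} \binom{k}{n-i}\frac{(m-n)!}{(m-k-i)!}\frac{n!}{i!}(x-1)^{m-k-i}x^{i}.
\]
In parallel, I would expand $\partial_x^n[(x-1)^{m-k}x^k]$ on the right-hand side. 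Because $k\geq n$, the factor $\partial_x^{n-j}x^k$ never vanishes in the summation range, so the sum runs from $j=0$ to $\min(n,m-k)$, producing monomials $(x-1)^{m-k-j}x^{k-n+j}$. Dividing by $x^{k-n}$ and multiplying by $\frac{(m-n)!}{(m-k)!}$ yields an expansion of exactly the same shape $\sum_{j=0}^{\min(n,m-k)} C_j(x-1)^{m-k-j}x^{j}$ as the left-hand side.

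Finally, I would compare coefficients. Using $\binom{k}{n-i}=\frac{k!}{(n-i)!(k-n+i)!}$ on the LHS and $\binom{n}{i}=\frac{n!}{i!(n-i)!}$ together with the prefactor $\frac{(m-n)!}{(m-k)!}$ on the RHS, a direct rearrangement shows that both coefficients reduce to the symmetric quantity
\[
\frac{k!\,n!\,(m-n)!}{i!\,(n-i)!\,(k-n+i)!\,(m-k-i)!},
\]
so the identity holds term-by-term. The main obstacle is purely bookkeeping: tracking the ranges of summation (the hypothesis $k\geq n$ is essential to ensure both expansions share the index range $0\leq i\leq\min(n,m-k)$) and confirming that the prefactor $\frac{(m-n)!}{(m-k)!}x^{-(k-n)}$ exactly absorbs the mismatch between the two Leibniz expansions; no further conceptual ingredient is required.
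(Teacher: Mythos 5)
Your proof is correct. The paper itself gives no proof of this lemma, deferring entirely to the citation of Marcus--Spielman--Srivastava; your direct Leibniz-rule verification is an entirely appropriate elementary argument that supplies the missing details. The bookkeeping checks out: on the left, Leibniz gives a sum over $j\in[k-n,\min(k,m-n)]$, and the reindexing $i=j-(k-n)$ maps this bijectively onto $i\in[0,\min(n,m-k)]$, exactly matching the right-hand side's range (where the hypothesis $k\geq n$ guarantees $\partial_x^{n-j}x^k$ never vanishes); and after absorbing the prefactor $\frac{(m-n)!}{(m-k)!}x^{-(k-n)}$, both sides' $i$-th coefficients reduce to $\frac{k!\,n!\,(m-n)!}{i!\,(n-i)!\,(k-n+i)!\,(m-k-i)!}$, which is manifestly symmetric under the swap $(n,k)\leftrightarrow(k,n)$ together with $i\leftrightarrow i$ — the structural reason the identity holds.
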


\begin{proof}[Proof of Lemma \ref{le:knh1} ]
According to (\ref{eq:fempty}) and Lemma \ref{le:knh}, we have
 \[
 \lambda_{\rm min}(f_\emptyset)= \lambda_{\rm min}(\tilde{g}_\emptyset),
\]
where $\tilde{g}_\emptyset(x)=\frac{1}{x^{k-n}}\partial_x^n (x-1)^{m-k}x^k$.
A lower bound for 
\[
\lambda_{\rm min}(g_\emptyset) = \lambda_{\rm min}\left(\frac{1}{x^{n-k}}\partial_x^k (x-1)^{m-n}x^n\right)
\]
 is provided by Lemma \ref{le:gmin}.
We apply Lemma \ref{le:gmin} with the symbols $k$ and $n$ interchanged, yielding:
\[
 \lambda_{\rm min}(f_\emptyset)=  \lambda_{\rm min}(\tilde{g}_\emptyset) \,\,\geq \,\,\frac{k-n+1}{\alpha\cdot n(m-k)+k-n+1},
\]
where 
\[
\alpha:=\alpha_{m,n,k} := \begin{cases}
    \frac{1}{2}+\frac{1}{2} \left(1-h_1(m,k,n)\right)^{1/2} & \text{if } m \geq n + k,  \\
   \frac{1}{2}+\frac{1}{2} \left(1- h_2(m,k,n)\right)^{1/2} & \text{if } m < n + k. \\
\end{cases}
\]
This modification is essential because Lemma \ref{le:gmin} originally assumes $k \leq n$, whereas our current scenario requires $n \leq k$.
\end{proof}

\section{Proof of Theorem \ref{th:kn23}}

\begin{proof}
The proof follows a similar structure to that of Theorem \ref{th:main}, however, in this case, we can explicitly present $r_k(f_\emptyset)$
 (or $\lambda_{\rm min}(f_\emptyset)$). Here,  $f_\emptyset$ is defined in Lemma \ref{le:fempty} and $r_k(f_\emptyset)$ denotes the $k$-th largest root of $f_\emptyset$.
   Employing the same reasoning as in the proof of Theorem \ref{th:main}, it suffices to consider the case where $\vA\vA^\top={\mathbb I}_n$.

We first consider the case where $n=2$ and $k\geq n$.
According to the proof of Theorem \ref{th:main}, it is enough to consider $\lambda_{\rm min}(f_\emptyset)$.
For $n=2$,  Lemma \ref{le:fempty} implies 
\[
\begin{aligned}
f_\emptyset(x)&=\frac{(m-k)!}{m!} \frac{(m-2)!}{(m-k)!}((m-k)(m-k-1)x^2+2k(m-k)(x-1)x+k(k-1)(x-1)^2)\\
&=\frac{(m-k)!}{m!} \frac{(m-2)!}{(m-k)!} \left( m(m-1)x^2 -2k(m-1)x + k(k-1)\right).
\end{aligned}
\]
Then a simple calculation shows that 
\[
\lambda_{\rm min}(f_\emptyset)=\frac{k-\sqrt{k(m-k)/(m-1)}}{m}.
\]
Employing a methodology analogous to that used in the proof of Theorem \ref{th:main}, we demonstrate the existence of a subset $S_0 \subset [m]$
  of size $k$, such that:
\[
 \sigma_{\rm min}(\vA_{S_0})^2 \geq \lambda_{\rm min}(f_\emptyset) = \frac{k-\sqrt{k(m-k)/(m-1)}}{m}=g_1(k).
 \]

 For the case where $n=3$, Lemma \ref{le:fempty} implies
 \[
\begin{aligned}
f_{\emptyset}(x)=&-\frac{(m-3)!}{(m-k)!}\left(k(k-1)(k-2)(1-x)^3-3k(k-1)(m-k)x(1-x)^2\right.\\
 & \left. \qquad +3k(m-k)(m-k-1)x^2(1-x)-(m-k)(m-k-1)(m-k-2)x^3\right)\\
=&\frac{(m-3)!}{(m-k)!} \left(m(m-1)(m-2)x^3-3k(m-1)(m-2)x^2+3k(k-1)(m-2)x-k(k-1)(k-2)\right).
\end{aligned}
\]

 Employing the  three real-root formula due to François Viète \cite{3ci}, we obtain 
 \[
 \lambda_{\rm min}(f_\emptyset)
=\frac{k}{m}+\frac{2}{m}\sqrt{\frac{k(m-k)}{m-1}}\cos \left(\frac{1}{3} \arccos\left(\frac{m-2k}{m-2}\sqrt{\frac{m-1}{k(m-k)}}\right) +\frac{2\pi}{3}\right)=g_2(k).
 \]
  As in the previous analysis, we establish the existence of a subset $S_0 \subset [m]$
  with cardinality $k$, satisfying:
\[
 \sigma_{\rm min}(\vA_{S_0})^2 \geq g_2(k).
 \]

 For cases where $k \in \{2,3\}$ and $n \geq k$, it suffices to present the value of $r_k(f_\emptyset)$.
 For $k=2$, as established in Lemma \ref{le:gmin}, we have $\lambda_{\rm min}(g_\emptyset) = r_k(f_\emptyset)$, where
 \[
g_{\emptyset}(x)=n(n-1)(1-x)^2+2n(m-n)x(x-1)+(m-n)(m-n-1)x^2.
\]
 A straightforward calculation yields:
\[
r_2(f_\emptyset)=\lambda_{\rm min}(g_\emptyset)=\frac{n-\sqrt{n(m-n)/(m-1)}}{m}=g_1(n).
\]

For the case where $k=3$, we have 
\[
g_\emptyset(x)=-m(m-1)(m-2)x^3+3(m-1)(m-2)nx^2-3(m-2)n(n-1)x+n(n-1)(n-2).
\]
 Applying the three real-root formula attributed to François Viète \cite{3ci}, we obtain:
\[
\begin{aligned}
r_3(f_\emptyset)=\lambda_{\rm min}(g_\emptyset)
= \frac{n}{m}+\frac{2}{m}\sqrt{\frac{n(m-n)}{m-1}}\cos \left(\frac{1}{3} \arccos\left(\frac{m-2n}{m-2}\sqrt{\frac{m-1}{n(m-n)}}\right) +\frac{2\pi}{3}\right)=g_2(n).
\end{aligned}
\]
The existence of a subset $S_0 \subset [m]$
  with cardinality $k$ that satisfies the conditions stated in the theorem follows as a direct consequence.
\end{proof}

\section*{Acknowledgments}
The author would like to express his sincere gratitude to  Prof. Jiang Yang from Southern University of Science and Technology for valuable discussions, which notably directed his attention to reference \cite{HongPan}. This insightful exchange took place during the ``{\em Frontiers in Scientific Computing and Numerical Analysis Workshop}" organized by Prof. Huazhong Tang and Prof. Kailiang Wu at Southern University of Science and Technology from March 28 to March 30, 2025.

\end{document}